\documentclass[12pt,reqno]{amsart}
 
\usepackage{latexsym, amsmath, amssymb, amscd}

\topmargin=0.1in

\usepackage{mathrsfs}

\newtheorem{theorem}{Theorem}[section]
\newtheorem{lemma}[theorem]{Lemma}
\newtheorem{proposition}[theorem]{Proposition}
\newtheorem{corollary}[theorem]{Corollary}

\theoremstyle{definition}
\newtheorem{definition}[theorem]{Definition}

\theoremstyle{remark}
\newtheorem{remark}[theorem]{Remark}

\numberwithin{equation}{section}

% Spacing Adjustments
%   +5/18 em = \;     +4/18 em = \:     +3/18 em = \,     +2/18 em = \ts     +1/18 em = \tts
\newcommand{\ts}{\hspace{.11111em}}
\newcommand{\tts}{\hspace{.05555em}}
%   -3/18 em = \!     -2/18 em = \nts     -1/18 em = \ntts

% Letters

\newcommand{\Ncal}{\mathcal{N}}
\newcommand{\Ucal}{\mathcal{U}}

% Blackboard Letters
\newcommand{\F}{\mathbb{F}}

%Topology
 %

% Riemannian Geometry
\DeclareMathOperator{\G}{\operatorname{\mathcal{G}}}  % Riemannian metric 

\DeclareMathOperator{\vol}{\operatorname{\mathsf{Vol}}\tts}
\DeclareMathOperator{\inj}{\operatorname{\mathsf{Inj}}\tts} % Injectivity radius
 % Injectivity radius
\DeclareMathOperator{\emb}{\operatorname{\mathsf{Emb}}\tts} % Embolic volume 
 % Minimal volume entropy 

% Smooth/Topological Manifolds

% Systolic geometry

% 3-manifolds

% Miscellaneous

\begin{document}
 
\baselineskip.525cm
 
\title{Embolic volume and Betti numbers}

\author[L.~Chen]{Lizhi Chen}

\thanks{}

 \address{
\hspace*{0.055in}School of Mathematics and Statistics, Lanzhou University \newline
\hspace*{0.175in} Lanzhou 730000, P.R. China 
}

\email{\hspace*{0.025in} lizhi@ostatemail.okstate.edu}

\thanks{Supported by NSFC Grant No. 11901261 , and the Fundamental Research Funds for the Central Universities of No. lzujbky-2017-26 }

\subjclass[2010]{Primary 53C23, Secondary 53C20.}

\keywords{Embolic volume, Betti numbers, Good covering, Nerve}

\date{\today}

\begin{abstract}
 We show a new result of relating embolic volume of compact manifolds to Betti numbers.  The result is an improvement to Durumeric's previous work. The proof is based on Gromov's method appeared in systolic geometry. 
\end{abstract}

\maketitle 

\tableofcontents

\section{Introduction}
 
Let $M$ be a compact $n$-dimensional manifold. In this paper we show that the embolic volume of $M$ is related to Betti numbers. This paper is a successive part of \cite{Chen2019}. Relation between the first Betti number and embolic volume is discussed in \cite{Chen2019}.  

For a manifold $M$ endowed with a Riemannian metric $\G$, denote it by $(M, \G)$. Let $\inj(M, \G)$ be the injectivity radius of a Riemannian manifold $(M, \G)$. Embolic volume of the compact $n$-manifold $M$, denoted by $\emb (M)$, is defined to be
\begin{equation*}
 \inf_{\G} \frac{\vol_{\G}(M)}{\inj(M, \G)^n},
\end{equation*}
where the infimum is taken over all Riemannian metrics $\G$ on $M$. Let $\F$ be a coefficient field of homology goups. Main result of this paper is as follows. 
\begin{theorem}\label{theorem_main}
 The embolic volume of a $n$-dimensional compact Riemannian manifold satisfies
 \begin{equation} \label{estimate_main}
  \emb(M) \geqslant C_n \frac{b_k (M; \F)}{\exp{ \left( C_n^{\prime} \sqrt{ \log{b_k (M; \F)} } \right) }} ,
 \end{equation}
 where $C_n$ and $C_n^{\prime}$ are constants only depending on the manifold dimension $n$, and can be explicitly given. 
\end{theorem}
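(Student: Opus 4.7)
The plan is to adapt Gromov's nerve/covering method from systolic geometry. Fix a Riemannian metric $\G$ on $M$ within a factor $(1+\varepsilon)$ of realizing $\emb(M)$; by scale-invariance of $\vol/\inj^n$ we may rescale so that $\inj(M,\G)=1$. Let $\{x_i\}_{i=1}^N$ be a maximal $\rho$-separated net for a small dimensional constant $\rho=\rho_n<1$, and consider the cover $\Ucal=\{B(x_i,2\rho)\}_{i=1}^N$. For $\rho$ small enough, nonempty finite intersections of the $B(x_i,2\rho)$ are contractible (via convexity in the exponential chart), so the Nerve Theorem identifies $M$ up to homotopy with the nerve $\Ncal(\Ucal)$; in particular $b_k(\Ncal;\F)=b_k(M;\F)$.

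Next I would bound the number of vertices $N$. The balls $B(x_i,\rho/2)$ are pairwise disjoint, and since $\rho/2<\inj(M,\G)$, Croke's lower bound on the volume of small metric balls gives $\vol_\G(B(x_i,\rho/2))\geqslant c_n\rho^n$. Summing over $i$ and using the normalization yields
\begin{equation*}
 N\;\leqslant\;\frac{\vol_\G(M)}{c_n\rho^n}\;\leqslant\;C_n\,\emb(M)(1+\varepsilon).
\end{equation*}

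The technical core of the proof is a refined bound on the Betti numbers of the nerve, of the form
\begin{equation*}
 b_k(\Ncal;\F)\;\leqslant\;N\cdot\exp\bigl(C_n'\sqrt{\log N}\bigr).
\end{equation*}
The crude count $b_k\leqslant\binom{N}{k+1}$ is polynomial in $N$ and far too weak. The improvement exploits the fact that packing considerations bound the local multiplicity of $\Ucal$ (each point of $M$ lies in at most a dimensional constant number of the balls), so $\Ncal$ is essentially a bounded-degree simplicial complex on $N$ vertices. Were the multiplicity uniformly controlled by a single constant across $M$ one would even get $b_k\leqslant C_n N$; since without two-sided curvature bounds the multiplicity estimate degrades at clusters of centers, Gromov's iterated multi-scale density argument (the same mechanism behind the logarithmic correction in the main systolic inequality) is required to control the accumulation of balls, and this is what introduces the sub-polynomial factor $\exp(C_n'\sqrt{\log N})$. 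I anticipate this combinatorial-geometric step to be the principal obstacle of the argument.

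Finally, combining the two displays above and noting that $b\leqslant N\exp(C_n'\sqrt{\log N})$ forces $N\geqslant b/\exp(C_n'\sqrt{\log b})$ (since $N\leqslant b$ up to constants, hence $\log N\leqslant\log b$ up to additive constants), I let $\varepsilon\to0$ to conclude
\begin{equation*}
 \emb(M)\;\geqslant\;C_n\,\frac{b_k(M;\F)}{\exp\bigl(C_n'\sqrt{\log b_k(M;\F)}\bigr)},
\end{equation*}
which is \eqref{estimate_main}.
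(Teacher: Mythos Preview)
Your overall architecture --- good cover, nerve theorem, Croke's local inequality --- matches the paper, and you correctly flag the multi-scale density argument as the crux. The gap is that you have set things up so that this argument cannot be run. You fix a single radius $\rho$, bound the vertex count $N$ linearly in $\vol/\inj^n$, and then hope a separate density step yields $b_k\leqslant N\exp(C_n'\sqrt{\log N})$. But with only an injectivity-radius hypothesis there is no upper bound on $\vol_\G(B(p,5\rho))$, so a single vertex of your nerve can have degree comparable to $N$; no combinatorial post-processing of a fixed-radius nerve recovers the sub-polynomial factor. The multi-scale input is not a black box applied after the cover is built --- it has to be built into the cover itself.

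In the paper the cover is by \emph{good balls} of variable radii: at each $p$ one takes the largest $R\leqslant R_0=\tfrac12\inj$ with $\vol(B(p,5R))\leqslant 5^{\,n+\theta}\vol(B(p,R))$, selects a maximal disjoint family $\{B(p_j,R_j)\}$, and covers by $\{B(p_j,2R_j)\}$. The doubling condition at scale $R_j$ controls how many neighbouring balls can meet $B(p_j,2R_j)$, while the failure of doubling at every larger radius forces $R_j\geqslant 5^{-k}R_0$ with $k\leqslant\theta^{-1}\log_5(\vol_\G(M)/\beta_n R_0^n)$. Choosing $\theta=\sqrt{\log_5(\vol_\G(M)/\beta_n R_0^n)}$ balances the two effects and yields directly
\[
 t_1\;\leqslant\;\frac{\vol_\G(M)}{\beta_n R_0^n}\,5^{\,n+(n+1)\sqrt{\log_5(\vol_\G(M)/\beta_n R_0^n)}},
\]
after which $b_k\leqslant t_k\leqslant 2t_1$ finishes. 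There is no intermediate passage through $N$; the $\sqrt{\log}$ correction arises from optimising $\theta$, not from a bounded-degree estimate on the nerve.
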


Theorem \ref{theorem_main} is similar to Gromov's theorem of systolic volume in systolic geometry, see \cite[Section 6.4.C.]{Gromov1983} or \cite[Section 4.46.]{Gromov1999}. And in fact the proof of Theorem \ref{estimate_main} is basically from a generalization of detailed form of Gromov's method. Berger named Gromv's method as ``covering trick'' (see Berger \cite[Section 7.2.1.2., Lemma 125.]{Berger2003}), which is explicitly explained in \cite{Chen2019}.

Previously, Durumeric showed the following result. 
\begin{theorem}[Durumeric 1989, see {\cite[Section 5.3.]{Durumeric1989}}]
 For a compact $n$-dimensional manifold $M$, its embolic volume $\emb(M)$ and Betti numbers $b_k (M; \F)$ are related as follows,
 \begin{equation*}
  \emb (M) \geqslant C_{n, k} \ts \ts {b_k (M; \F)}^{\frac{1}{k + 1}} ,
 \end{equation*}
 where $C_{n, k}$ is a constant only depending on $n$ and $k$. 
\end{theorem}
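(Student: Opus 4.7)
My plan is to prove Durumeric's bound by the elementary form of Gromov's covering trick: cover $M$ by a controlled number of small geodesic balls whose nerve recovers the homotopy type of $M$, and then bound $b_k$ by the crude count of $k$-simplices of the nerve. This naive count already yields the exponent $1/(k+1)$; no multiplicity estimate on the cover is needed, only a bound on the total number of balls, which is exactly what the embolic volume controls.

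Concretely, fix any Riemannian metric $\G$ on $M$ and write $\rho = \inj(M, \G)$. Choose a scale $r = \rho/c_n$ with $c_n$ a dimension-dependent constant large enough that every closed ball of radius $2r$ lies in a normal coordinate chart and is strongly convex; then finite intersections of such balls are geodesically convex, hence contractible. Take a maximal $r$-separated subset $\{x_1, \ldots, x_N\} \subset M$, so that the balls $B(x_i, r)$ are pairwise disjoint while $\{B(x_i, 2r)\}_{i=1}^{N}$ covers $M$. Croke's local volume lower bound $\vol_{\G}(B(x_i, r)) \geqslant \gamma_n \, r^n$, valid in the range $r \leqslant \rho/2$ with an explicit dimensional constant $\gamma_n$, summed over $i$ gives
\begin{equation*}
 N \leqslant \frac{\vol_{\G}(M)}{\gamma_n \, r^n} = \frac{c_n^n}{\gamma_n} \cdot \frac{\vol_{\G}(M)}{\inj(M, \G)^n} .
\end{equation*}
Since $\{B(x_i, 2r)\}_{i=1}^{N}$ is a good cover, the Nerve Lemma provides a homotopy equivalence $\Ncal \simeq M$, and in particular $b_k(M; \F) = b_k(\Ncal; \F)$.

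The nerve $\Ncal$ has at most $\binom{N}{k+1}$ simplices in dimension $k$, so from the simplicial cochain complex
\begin{equation*}
 b_k(M; \F) \leqslant \binom{N}{k+1} \leqslant \frac{N^{k+1}}{(k+1)!} .
\end{equation*}
Substituting the bound on $N$, taking $(k+1)$-th roots, and passing to the infimum over metrics $\G$ yields the desired inequality $\emb(M) \geqslant C_{n,k} \, b_k(M; \F)^{1/(k+1)}$ with an explicit constant $C_{n,k}$ depending only on $n$ and $k$.

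The main technical obstacle is the good-cover step: one must pin down a single dimensional constant $c_n$ so that the $(2r)$-balls are simultaneously strongly convex (to supply the contractible intersections required by the Nerve Lemma) and still admit Croke's volume lower bound. The standard route is to work inside the injectivity radius via the exponential map, where the relevant convexity estimates for small metric balls follow from dimension-dependent comparison bounds in normal coordinates. Once this scale is fixed, the rest of the argument reduces to the elementary binomial count above.
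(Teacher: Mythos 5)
The paper does not actually prove Durumeric's theorem: it is quoted as a citation (Durumeric 1989, Section 5.3), and the author explicitly remarks that the paper's own method differs from Durumeric's. So there is no proof of this particular statement in the paper to match against. What the paper does prove is the sharper Theorem~1.1, and your argument is exactly the \emph{unrefined} version of the covering trick used there: take a maximal $r$-separated set at a single scale $r\sim\inj/c_n$, invoke Croke's local inequality $\vol(B(x,r))\geqslant\beta_n r^n$ to bound the number $N$ of balls by $\vol/\inj^n$, pass to the nerve, and count. The paper's improvement over the cited Durumeric bound happens precisely at your last step: rather than the crude estimate $t_k\leqslant\binom{N}{k+1}$, which forces the exponent $1/(k+1)$, it replaces the single-scale packing by a Gromov-style family of ``$\theta$-good'' balls at varying scales subject to a doubling condition and bounds the number of pairwise intersections $T$; that is what upgrades $b_k^{1/(k+1)}$ to $b_k/\exp(C_n'\sqrt{\log b_k})$. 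Given the good-cover step, your count $b_k(M;\F)\leqslant\binom{N}{k+1}\leqslant N^{k+1}/(k+1)!$ combined with $N\leqslant c_n^n\vol_{\G}(M)/(\gamma_n\inj(M,\G)^n)$ and an infimum over metrics does yield $\emb(M)\geqslant C_{n,k}\,b_k(M;\F)^{1/(k+1)}$ with an explicit $C_{n,k}$, so the arithmetic is sound.

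The one place you should be more careful --- and it is a subtlety the paper itself glosses over in the sentence asserting that balls of radius $R<\tfrac12\inj(M,\G)$ ``are convex'' --- is the claim that some dimensional constant $c_n$ makes every ball of radius $2r=2\rho/c_n$ strongly convex. Geodesic convexity of metric balls is \emph{not} a consequence of an injectivity-radius lower bound alone: the convexity radius can be an arbitrarily small fraction of the injectivity radius in the absence of an upper sectional-curvature bound, so no purely dimensional $c_n$ can exist with the property you state. What the nerve argument actually needs is that all finite intersections of the chosen balls are contractible, and this must be established by some other mechanism (e.g.\ a star-shapedness or center-of-mass retraction valid for metric balls below half the injectivity radius, which is the curvature-free fact embolic arguments actually rely on), not as a corollary of ``small implies convex in normal coordinates.'' If you grant that assertion as the paper does, the rest of your proof is correct and elementary; but stating it as a consequence of dimensional comparison bounds in normal coordinates is, as written, a gap.
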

\begin{remark}
The method used to prove our theorem is different from Durumeric's approach.
\end{remark}

In the proof of main theorem, we are going to use Croke's local embolic inequality: if $0 < R \leqslant \frac{1}{2} \inj(M, \G) $, the inequality
\begin{equation}\label{Croke}
 \vol_{\G} ( B(p, R) ) \geqslant \beta_n R^n
\end{equation}
holds for every ball $B(p, R) \subset M$ in a compact Riemannian manifold $(M, \G)$, where $\beta_n$ is a constant only depending on the manifold dimension $n$, and a non-optimal value can be explicitly given, see Croke \cite{Croke1980} or Berger \cite[Section 7.2.4.]{Berger2003}. 
\begin{remark}
 Croke's local inequality (\ref{Croke}) is a key step in our proof. There is a similar local estimate of volume of balls in Gromov's theorem of systolic geometry. 
\end{remark} 

In the proof of Theorem \ref{theorem_main}, we construct a covering space of $M$ with balls $B(p, R)$ satisfying $0 < R < \frac{1}{2} \inj(M, \G)$. A covering space is called a good covering, if every subset of the covering is contractible, and every intersection of finite subsets of the covering is also contractible. Note that when $0 < R < \frac{1}{2} \inj(M, \G)$, the ball $B(p, R) \subset M$ is convex and diffeomorphic to an Euclidean ball, so that contractible. Hence the covering space we constructed is a good covering. Applying a refined version of Gromov's covering argument, we show the estimate in Theorem \ref{theorem_main}. By nerve theorem, nerve of a good covering is homotopy equivalent to the manifold $M$. Denote by $P$ underlying polyhedron of the geometric realization of the nerve. Theorem \ref{theorem_main} is yielded by the following theorem.
\begin{theorem}\label{polyhedron}
 For a given $n$-dimensional compact manifold $M$, there exists a polyhedron $P$, such that the number $t_k$ of $k$-simplices of $P$ satisfies
 \begin{equation}
   t_k \leqslant \frac{2 \vol_{\G}(M)}{\beta_n R_0^n} 5^{n + (n + 1) \sqrt{\log_5{\frac{\vol_{\G}(M)}{\beta_n R_0^n}}}} .
 \end{equation}
\end{theorem}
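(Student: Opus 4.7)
The strategy is to realize $P$ as the nerve of a carefully chosen good covering of $M$ by small geodesic balls, and to control the number of $k$-simplices by combining Croke's inequality (\ref{Croke}) with a refined multi-scale version of Gromov's covering trick.

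I would first fix a radius $R_0 < \tfrac{1}{2}\inj(M,\G)$ and pick a maximal $R_0$-separated subset $\{p_1,\ldots,p_N\}$ of $M$; by maximality the balls $\Ucal = \{B(p_i,R_0)\}_{i=1}^N$ cover $M$. Since $R_0$ lies below half the injectivity radius, each such geodesic ball is strongly convex and diffeomorphic to a Euclidean ball, and any finite intersection of strongly convex sets is itself convex and contractible. Hence $\Ucal$ is a good cover, and by the nerve theorem the geometric realization $P$ of $\Ncal(\Ucal)$ is homotopy equivalent to $M$. The number $N$ of vertices of $P$ is bounded by applying Croke's inequality to the disjoint balls $B(p_i, R_0/2)$ (whose radius satisfies the hypothesis of (\ref{Croke})) and summing their volumes; this yields $N \leq C_n\,\vol_\G(M)/(\beta_n R_0^n)$ for a dimensional constant, absorbable into the factor of $2$ appearing in the statement.

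The crux is bounding $t_k$ for $k \geq 1$. A $k$-simplex of $\Ncal(\Ucal)$ is a $(k{+}1)$-tuple of centers whose $R_0$-balls share a point, so these centers lie pairwise within $2R_0$; thus $t_k$ is controlled by $N$ times the multiplicity of $\Ucal$, i.e.\ the maximum number of $R_0$-separated points contained in a ball of radius $2R_0$. In Euclidean space a standard packing comparison would bound this by $5^n$, but in a Riemannian manifold no a priori upper bound on $\vol_\G(B(x,2R_0))$ is available, so a purely local argument fails. The refined Gromov covering trick bridges this gap by propagating the global volume bound through a family of intermediate scales, iteratively applying Croke's inequality at each scale to control the packing number at the next; balancing the number of rescaling steps against the geometric cost per step yields a multiplicity bound of the form $5^{n+(n+1)\sqrt{\log_5 N_0}}$ with $N_0 = \vol_\G(M)/(\beta_n R_0^n)$. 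Multiplying by $N$ produces the claimed inequality, and the main obstacle throughout is the calibration of this multi-scale iteration so that the exponent $\sqrt{\log_5 N_0}$ emerges with the correct constants; once this calibration is set up, the contractibility of convex intersections and the nerve theorem do the remaining work.
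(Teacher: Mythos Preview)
Your overall architecture---build a good cover by small geodesic balls, invoke the nerve theorem, then count simplices via volume---matches the paper. The gap is in the ``crux'' paragraph: you fix a \emph{single} radius $R_0$ for all balls and then say that a ``refined multi-scale covering trick'' will deliver the multiplicity bound by ``iteratively applying Croke's inequality at each scale.'' But Croke's inequality (\ref{Croke}) is only a \emph{lower} bound on ball volume; iterating it across scales never produces the upper bound on $\vol_\G(B(x,2R_0))$ (or any substitute) that a packing argument at fixed radius requires. As written, this step is a placeholder rather than an argument, and I do not see how to fill it while keeping all balls at the same radius.

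The paper's mechanism is genuinely different and is precisely what you are missing. Instead of a common radius, at each point $p$ one selects the radius $R_p\le R_0$ so that $B(p,R_p)$ is a \emph{$\theta$-good ball}, meaning
\[
\vol_\G\bigl(B(p,5R_p)\bigr)\le 5^{\,n+\theta}\,\vol_\G\bigl(B(p,R_p)\bigr),
\]
and this fails for every larger radius. This built-in doubling inequality is exactly the surrogate for the absent upper volume bound: if two doubled balls $B(p_j,2R_j)$ and $B(p_i,2R_i)$ meet with $R_j\ge R_i$, the triangle inequality forces $B(p_i,R_i)\subset B(p_j,5R_j)$, and then the doubling condition converts the containment into a volume comparison. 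Summing over a maximal disjoint family of good balls and using Croke's inequality for the lower bound on each $\vol_\G(B(p_i,R_i))$ yields the estimate on the number $T$ of pairwise intersections; the choice $\theta=\sqrt{\log_5\bigl(\vol_\G(M)/(\beta_n R_0^n)\bigr)}$ balances the two competing factors and produces the exponent in the statement. So the ``multi-scale'' aspect is not an iterative rescaling of a fixed cover but rather the use of \emph{point-dependent} radii dictated by a doubling threshold; without introducing this good-ball selection your proposal cannot close the multiplicity estimate.
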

 
Theorem \ref{polyhedron} implies an estimate of embolic volume. 
\begin{corollary} \label{nerve_2}
 For a compact $n$-dimensional manifold $M$, there exists a polyhedron $P$ homotopy equivalent to $M$, such that the number $t_k$ of $k$-simplices in $P$ satisfies
 \begin{equation}
  t_k \leqslant D_n \emb(M) \exp{ \left( \ts D_n^{\prime} \sqrt{ \log{ \emb(M) } } \ts \right) },
 \end{equation} 
 where $D_n$ and $D_n^{\prime}$ are two positive constants only depending on the manifold $n$, and can be explicitly given. 
\end{corollary}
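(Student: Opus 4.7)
The plan is to deduce Corollary \ref{nerve_2} directly from Theorem \ref{polyhedron} by optimizing the choice of metric $\G$ and radius $R_0$. The polyhedron $P$ in Theorem \ref{polyhedron} is, by construction, the geometric realization of the nerve of a good cover of $M$, hence homotopy equivalent to $M$ by the nerve theorem, so the remaining task is purely to convert the bound---stated in terms of the geometric quantity $\vol_{\G}(M)/(\beta_n R_0^n)$---into one stated in terms of the topological invariant $\emb(M)$.

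For any $\epsilon>0$, by definition of the embolic volume, I would first choose a Riemannian metric $\G$ on $M$ satisfying $\vol_{\G}(M)/\inj(M,\G)^n \leq \emb(M)+\epsilon$. Then I would take $R_0=\tfrac{1}{2}\inj(M,\G)(1-\delta)$ for small $\delta>0$, which still satisfies $0<R_0<\tfrac{1}{2}\inj(M,\G)$ so that Theorem \ref{polyhedron} applies. With this choice, $X:=\vol_{\G}(M)/(\beta_n R_0^n)$ is at most $(2^n/((1-\delta)^n\beta_n))(\emb(M)+\epsilon)$, so by sending $\epsilon,\delta\to 0$ the quantity $X$ may be made arbitrarily close to $A_n\emb(M)$, where $A_n:=2^n/\beta_n$. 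Feeding this into Theorem \ref{polyhedron} produces a polyhedron $P$ with $t_k\leq 2X\cdot 5^{n+(n+1)\sqrt{\log_5 X}}$.

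The remainder is algebraic manipulation. Using the identity $5^{(n+1)\sqrt{\log_5 X}}=\exp\bigl((n+1)\sqrt{(\ln 5)(\ln X)}\bigr)$, substituting $X\leq A_n\emb(M)$, expanding $\ln(A_n\emb(M))=\ln A_n+\ln\emb(M)$, and applying the inequality $\sqrt{a+b}\leq\sqrt{a}+\sqrt{b}$ for $a,b\geq 0$ separates the constant part of the exponent (which becomes a multiplicative factor absorbed into $D_n$) from the variable part $\sqrt{\ln\emb(M)}$ (which yields the $\exp\bigl(D_n'\sqrt{\log\emb(M)}\bigr)$ factor). All remaining $n$-dependent constants---the $5^n$, the leading $2A_n$, and the $\sqrt{\ln A_n}$ term promoted from inside the radical---combine to produce explicit $D_n$ and $D_n'$.

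The main obstacle is strictly bookkeeping: one has to handle the regime of small $\emb(M)$---where $\sqrt{\log\emb(M)}$ is undefined or negative---by enlarging $D_n$ so that the bound holds trivially there, and then verify in the regime $\emb(M)\geq e$ (say) that the clean computation above goes through. The conceptual content is already packaged in Theorem \ref{polyhedron} together with the nerve theorem.
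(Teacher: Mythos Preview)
Your proposal is correct and follows essentially the same route as the paper: set $R_0$ to (essentially) $\tfrac{1}{2}\inj(M,\G)$, substitute into the bound of Theorem~\ref{polyhedron}, and then separate the constant and variable parts of the exponent via $\sqrt{a+b}\leq\sqrt{a}+\sqrt{b}$ to absorb everything into $D_n$ and $D_n'$; the paper in fact writes out the resulting explicit constants $C_n=\frac{2^n}{\beta_n}5^{2n+(n+1)\sqrt{\log_5(2^n/\beta_n)}}$ and $C_n'=(n+1)\sqrt{\log 5}$. You are more careful than the paper in two places---the paper simply takes $R_0=\tfrac{1}{2}\inj(M,\G)$ without the $(1-\delta)$ cushion and never explicitly passes to a near-optimal metric, nor does it address the regime where $\log\emb(M)$ could be nonpositive---so your bookkeeping is, if anything, an improvement.
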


\vskip 10pt

\section{Good covering and proof of main theorem}

\subsection{Nerve theorem}

Let $X$ be a topological space, and $\Ucal$ be a covering space of $X$.
\begin{definition}
 The nerve of $\Ucal$, denoted by $\Ncal (\Ucal)$, is a simplicial complex, such that
 \begin{enumerate}
  \item there is a $0$-simplex (vertex) $u$ corresponding to each subset $U$ in $\Ucal$;
  \item there is a $k$-simplex spanned by $u_{i_1}, u_{i_2}, \cdots , u_{i_k} $, if  
   \begin{equation*}
    U_{i_1} \cap U_{i_2} \cap \cdots U_{i_k} \neq \emptyset   
   \end{equation*}
   holds for the corresponding subsets $U_{i_1}, U_{i_2}, \cdots , U_{i_k} \in \Ucal$.
 \end{enumerate}
\end{definition}
Denote by $| \Ncal (\Ucal) |$ the underlying polyhedron (or the underlying polyhedron of a geometric realization) of the nerve $\Ncal (\Ucal)$. 
\begin{definition}
 An open cover $\Ucal$ of space $X$ is called a good cover, if every subset $U \in \Ucal$ is contractible, and every finite intersection of subsets $U_{i_1}, U_{i_2}, \cdots , U_{i_k}$ of $\Ucal$ is also contractible.  
\end{definition}
\begin{theorem}[Nerve theorem, see Hatcher {\cite[Corollary 4G.3.]{Hatcher2002}}]
 If $\Ucal$ is a good cover of a paracompact topological space $X$, then underlying polyhedron $P$ of geometric realization of the nerve $\Ncal (\Ucal)$ is homotopy equivalent to $X$. 
\end{theorem}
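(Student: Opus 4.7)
The plan is to prove the nerve theorem via the standard ``homotopy colimit'' space associated to $\Ucal = \{U_\alpha\}_{\alpha \in I}$. For each non-empty finite subset $\sigma \subset I$ with $U_\sigma := \bigcap_{\alpha \in \sigma} U_\alpha \neq \emptyset$, form $U_\sigma \times \Delta^\sigma$, where $\Delta^\sigma$ is the geometric simplex on vertex set $\sigma$, and glue along face inclusions (for $\tau \subset \sigma$, identify $U_\sigma \times \Delta^\tau$ with its image in $U_\tau \times \Delta^\tau$ via $U_\sigma \hookrightarrow U_\tau$). Call the resulting space $Z$. It carries two natural projections
\begin{equation*}
 X \xleftarrow{\;p_X\;} Z \xrightarrow{\;p_N\;} |\Ncal(\Ucal)|,
\end{equation*}
sending the class of a pair $(y,t)$ to $y$ and to $t$, respectively. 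The goal is to show both are homotopy equivalences, so that $X \simeq Z \simeq |\Ncal(\Ucal)|$.

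To verify $p_X$ is a homotopy equivalence I would use paracompactness to choose a partition of unity $\{\phi_\alpha\}$ subordinate to $\Ucal$ and define a section $\iota \colon X \to Z$ by $\iota(x) = (x, \sum_\alpha \phi_\alpha(x)\, v_\alpha)$, where $v_\alpha$ is the vertex of the nerve corresponding to $U_\alpha$. Then $p_X \circ \iota = \mathrm{id}_X$ automatically. For the reverse composition, given $(y,t) \in Z$ with $t \in \Delta^\sigma$ and $y \in U_\sigma$, let $\tau = \{\alpha : \phi_\alpha(y) > 0\}$; since $\phi_\alpha$ is supported in $U_\alpha$, one has $y \in U_\tau$, and hence $y \in U_{\sigma \cup \tau}$, with $\sigma \cup \tau$ still spanning a simplex of $\Ncal(\Ucal)$. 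Both $t$ and $\sum \phi_\alpha(y) v_\alpha$ then lie in the convex set $\Delta^{\sigma \cup \tau}$, and the straight-line homotopy between them lifts continuously in $Z$ at the fixed second coordinate $y$, giving $\iota \circ p_X \simeq \mathrm{id}_Z$.

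For $p_N$, I would filter $|\Ncal(\Ucal)|$ by its skeleta and set $Z^{(k)} = p_N^{-1}(|\Ncal(\Ucal)|^{(k)})$. Attaching a $k$-simplex $\Delta^\sigma$ to the nerve corresponds, on the total-space side, to attaching $U_\sigma \times \Delta^\sigma$ along $U_\sigma \times \partial \Delta^\sigma$; since the good-cover hypothesis makes $U_\sigma$ contractible, the projection $U_\sigma \times \Delta^\sigma \to \Delta^\sigma$ is a homotopy equivalence of pairs, and the gluing lemma propagates the inductive weak equivalence $Z^{(k-1)} \to |\Ncal(\Ucal)|^{(k-1)}$ through to $Z^{(k)} \to |\Ncal(\Ucal)|^{(k)}$. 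Passing to the colimit and invoking Whitehead's theorem (both $Z$ and $|\Ncal(\Ucal)|$ have the homotopy type of CW complexes) upgrades $p_N$ to a homotopy equivalence. The main obstacle is precisely this skeletal induction for $p_N$: one must keep track at each attachment of the cofibration and equivalence structure and verify that the pushout gluing preserves weak equivalence, which is an instance of the standard but non-trivial principle that a cellular map whose fibers over every open cell are contractible is a weak homotopy equivalence.
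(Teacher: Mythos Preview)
The paper does not supply its own proof of this statement; it simply quotes it as Corollary~4G.3 in Hatcher and moves on, so there is nothing in the paper to compare your argument against line by line. That said, the route you sketch---the double projection
\[
X \xleftarrow{\;p_X\;} Z \xrightarrow{\;p_N\;} |\Ncal(\Ucal)|
\]
from the homotopy colimit (Hatcher's ``realization of the complex of spaces'')---is exactly the machinery behind Hatcher's 4G.1--4G.3, so in effect you are reproducing the proof the paper defers to.

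One point to tighten: your treatment of $p_N$ ends with a skeletal induction producing a \emph{weak} equivalence and then appeals to Whitehead. That step requires $Z$ to have the homotopy type of a CW complex, which you have not justified (only paracompactness of $X$ is assumed). Hatcher avoids this by using the gluing lemma for genuine homotopy equivalences along cofibrations at each cell attachment (the inclusions $U_\sigma \times \partial\Delta^\sigma \hookrightarrow U_\sigma \times \Delta^\sigma$ are cofibrations, and $U_\sigma \times \Delta^\sigma \to \Delta^\sigma$ is an honest homotopy equivalence since $U_\sigma$ is contractible), so the induction yields a homotopy equivalence directly without invoking Whitehead. Swapping in that form of the gluing argument closes the gap. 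Also, a small slip: in your homotopy for $p_X$ you fix ``the second coordinate $y$'', but in your convention $y$ is the first coordinate.
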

Moreover, if there exists a partition of unity 
\[ \{ \varphi_U : X \to [0, 1] | U \in \Ucal \} \]
subordinated to the open cover $\Ucal$, then the Alexandroff map (see Dugundji \cite[VIII. \textsection 5.]{Dugundji1966}) $\varphi: X \to | \Ncal (\Ucal) |$ defined by
\[ x \in X \mapsto \sum_{U \in \Ucal} \varphi_{U} (x) \ts u \in | \Ncal (\Ucal) |  \]
is a homotopy equivalence. 

\vskip 10pt

\subsection{Covering of good balls}

Given a Riemmannian manifold $(M, \G)$, we define balls with some growth condition to be good balls. 
\begin{definition}
 Let $R_0$ and $\theta$ be two positive constants. For $0 < R \leqslant R_0$, a ball $B(p, R) \subset M$ with center $p$ and radius $R$ is called a $\theta$-good ball, if the following are satisfied:
 \begin{enumerate}
  \item $ \vol_{\G} (B(p, 5 R)) \leqslant \alpha \vol_{\G} ( B(p, R) ) $ holds for $\alpha = 5^{n + \theta}$,
  \item $ \vol_{\G} ( B(p, 5R^{\prime}) ) > \alpha \vol_{\G} ( B(p, R^{\prime}) ) $ holds for any $R^{\prime} > R$. 
 \end{enumerate}
\end{definition} 
\begin{proposition}
 For any given $R_0 > 0$ and $\theta > 0$, there exists a $\theta$-good ball at any point $p$ in an $n$-dimensional Riemannian manifold. 
\end{proposition}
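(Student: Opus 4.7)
The plan is to define $R$ as the largest radius in $(0, R_0]$ at which the volume growth inequality (1) still holds, and then to read off condition (2) directly from maximality.

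Fix $p \in M$ and consider the set
\[ S = \bigl\{ R \in (0, R_0] : \vol_{\G}(B(p, 5R)) \leq \alpha \vol_{\G}(B(p, R)) \bigr\}, \qquad \alpha = 5^{n+\theta}. \]
The key step will be to show $S \neq \emptyset$. I would argue by contradiction: if the opposite strict inequality held at every $R \in (0, R_0]$, then feeding in the geometric sequence $R_k = R_0/5^k$ and iterating would yield
\[ \vol_{\G}(B(p, R_0)) > \alpha^K \vol_{\G}(B(p, R_K)) \]
for every $K \geq 1$. Choosing $K$ so large that $R_K \leq \tfrac{1}{2}\inj(M, \G)$, Croke's local embolic inequality (\ref{Croke}) gives $\vol_{\G}(B(p, R_K)) \geq \beta_n R_K^n$, whence
\[ \vol_{\G}(B(p, R_0)) > \bigl(\alpha/5^n\bigr)^K \beta_n R_0^n = 5^{\theta K} \beta_n R_0^n. \]
Letting $K \to \infty$ contradicts the finiteness of $\vol_{\G}(M)$, so $S$ is non-empty.

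Next I would set $R = \sup S \in (0, R_0]$. Since $B(p, r) = \bigcup_j B(p, r_j)$ whenever $r_j \uparrow r$, countable additivity gives left-continuity of both $r \mapsto \vol_{\G}(B(p, r))$ and $r \mapsto \vol_{\G}(B(p, 5r))$. Taking $R_j \in S$ with $R_j \uparrow R$ and passing to the limit in the defining inequality of $S$ shows that $R \in S$, which is exactly condition (1). For (2), any $R'$ with $R < R' \leq R_0$ fails to lie in $S$ by the definition of supremum, giving $\vol_{\G}(B(p, 5R')) > \alpha \vol_{\G}(B(p, R'))$ as required.

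The main obstacle is the non-emptiness of $S$; this is where Croke's lower bound at small scales is essential, since it rules out simultaneous $5^{n+\theta}$-expansion at every radius in $(0, R_0]$. Everything else is bookkeeping: continuity of ball volume from below, combined with the supremum property, disposes of conditions (1) and (2). This mirrors the role played by the analogous local volume estimate in Gromov's systolic construction that the paper is adapting.
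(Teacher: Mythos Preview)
Your argument is correct in the compact setting of the paper, but the key step---non-emptiness of $S$---is handled differently than in the paper. The paper invokes the small-radius volume expansion $\vol_{\G}(B(p,r)) = \omega_n r^n\bigl(1 + O(r^2)\bigr)$ (the ``scalar curvature formula''), from which $\vol_{\G}(B(p,5r))/\vol_{\G}(B(p,r)) \to 5^n < 5^{n+\theta}$ as $r \to 0$, so all sufficiently small radii lie in $S$ directly. You instead assume $S=\emptyset$, iterate along the geometric sequence $R_0/5^K$, and use Croke's lower bound to blow up the volume. The paper's route is purely local and therefore works on an arbitrary smooth Riemannian manifold, with no hypothesis on injectivity radius or total volume; your route needs $\inj(M,\G)>0$ (so that Croke applies at some scale) and $\vol_{\G}(B(p,R_0))<\infty$, both of which compactness supplies. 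In exchange, your argument stays entirely within the paper's existing toolkit---Croke's inequality is already on the table---and avoids importing an additional asymptotic formula. The supremum-plus-left-continuity step you use to pass from $S\neq\emptyset$ to an actual $\theta$-good ball is standard and is implicitly required by either approach.
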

\begin{proof}
 The proof relies on the formula of volume of balls with small radii in terms of scalar curvature, see \cite{Chen2019}. 
\end{proof}

Let $\{ B(x_j, R_j) \}_{j = 1}^N$ be a maximal system of disjoint good balls. Hence $ \Ucal = \{ B(p_j, 2 R_j) \}_{j = 1}^N$ is a covering space to $M$. The way of constructing covering like this is called ``covering trick'' by Berger, see  Berger \cite[Section 7.2.1.2, Lemma 125.]{Berger2003} or Chen \cite{Chen2019}. We estimate the number of pairwise intersections in $\Ucal$ in the following. Hence the number of simplices in the nerve $\Ncal (\Ucal)$ is obtained. 

\subsection{Proof of main theorem}

\begin{lemma}
 Assume that $B(p_j, 2 R_j)$ and $B(p_{j_i}, 2 R_{j_i})$ are balls in $\Ucal$. If 
 \begin{equation*}
  B(p_j , 2 R_j) \cap B(p_{j_i}, 2 R_{j_i}) \neq \emptyset
 \end{equation*}
 and $R_j \geqslant R_{j_i}$, then we must have
 \begin{equation}\label{5-ball}
  B(p_{j_i}, R_{j_i}) \subset B(p_{j}, 5 R_{j_i}) . 
 \end{equation}
\end{lemma}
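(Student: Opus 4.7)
The proof is a direct triangle-inequality chase of the sort that is standard in Vitali-type covering arguments. Concretely, I would fix an arbitrary point $x \in B(p_{j_i}, R_{j_i})$ and, using the hypothesis that $B(p_j, 2R_j) \cap B(p_{j_i}, 2R_{j_i})$ is nonempty, select a witness point $q$ in that intersection. The triangle inequality applied along the chain $x \to p_{j_i} \to q \to p_j$ yields
\begin{equation*}
 d(x, p_j) \leqslant d(x, p_{j_i}) + d(p_{j_i}, q) + d(q, p_j) < R_{j_i} + 2R_{j_i} + 2R_j = 3R_{j_i} + 2R_j .
\end{equation*}
Invoking the ordering $R_j \geqslant R_{j_i}$ gives $3R_{j_i} + 2R_j \leqslant 3R_j + 2R_j = 5R_j$, so $x \in B(p_j, 5R_j)$. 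Since $x$ was arbitrary, this establishes the containment of $B(p_{j_i}, R_{j_i})$ in a $5$-times enlargement centred at $p_j$.

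There is essentially no obstacle here; the lemma is a bookkeeping step whose whole point is to identify which of the two radii, namely the larger one $R_j$, controls the enlargement factor. I do want to flag that as written the conclusion has $5R_{j_i}$ on the right-hand side of \eqref{5-ball}, but this appears to be a typographical slip: the hypothesis $R_j \geqslant R_{j_i}$ forces the larger radius into the final bound, and, more importantly, it is $5R_j$ (not $5R_{j_i}$) that is needed in order to apply the $\theta$-good ball volume comparison $\vol_{\G}(B(p_j, 5R_j)) \leqslant 5^{n+\theta} \vol_{\G}(B(p_j, R_j))$ at the very next step. I would therefore prove the inclusion $B(p_{j_i}, R_{j_i}) \subset B(p_j, 5R_j)$.

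Downstream, the role of this inclusion is precisely to convert an incidence count in the cover $\Ucal$ into a volume comparison: combined with the disjointness of the original good balls $\{B(x_j, R_j)\}$ and Croke's local lower bound $\vol_{\G}(B(p, R)) \geqslant \beta_n R^n$ for $R \leqslant \tfrac12 \injrad(M, \G)$, the containment will cap the number of indices $j_i$ whose enlarged balls $B(p_{j_i}, 2R_{j_i})$ can meet a fixed $B(p_j, 2R_j)$ by a constant of the form $5^{n + \theta}$. That in turn bounds the valence at each vertex of the nerve $\Ncal(\Ucal)$ and feeds directly into Theorem \ref{polyhedron}; the present lemma is the geometric linchpin of that chain.
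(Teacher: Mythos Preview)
Your proof is correct and is exactly the approach the paper takes: the paper's own proof consists of the single sentence ``The relation (\ref{5-ball}) holds according to the triangle inequality,'' and you have spelled out that computation in full. Your observation about the typographical slip is also correct: with the stated hypothesis $R_j \geqslant R_{j_i}$ the chain of inequalities can only terminate in $5R_j$, and indeed it is the inclusion $B(p_{j_i}, R_{j_i}) \subset B(p_j, 5R_j)$ that the paper actually uses in the displayed volume estimate immediately afterward.
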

\begin{proof}
 The relation (\ref{5-ball}) holds according to the triangle inequality. 
\end{proof}

Now we take $R_0 = \frac{1}{2}\inj(M, \G)$. For a good ball $B(p, R) \subset M$, let 
\begin{equation*}
 5^{-k} R_0 < R \leqslant 5^{-k + 1} R_0
\end{equation*} 
for some integer $k \geqslant 1$. When $R = R_0$, we take $k = 0$. The integer $k$ thus depends on $p$. An upper bound of $R$ can be obtained in terms of the growth condition in the definition of good balls (see \cite{Chen2019} for details),
\begin{equation*}
 k < \frac{\log_5 \frac{\vol_{\G}(M)}{\beta_n R_0^n}}{-n + \log_5 \alpha} .
\end{equation*}  
Let $\log_5 \alpha = n + \sqrt{ \log_5 \frac{\vol_{\G}(M)}{\beta_n R_0^n} }$. That is, we choose $\theta = \sqrt{ \log_5 \frac{\vol_{\G}(M)}{\beta_n R_0^n} }$. The upper bound of $k$ now is equal to $\theta$. 

For the maximal system $\{ B(p_j, R_j) \}_{j = 1}^N$ of disjoint good balls constructed above, we further assume that the following condition holds: 
\begin{enumerate}
 \item $R_1 \geqslant R_2 \geqslant \cdots \geqslant R_N $,
 \item  $B(p_j, 2R_j) \cap B(p_i, 2R_i) = \emptyset $ if $j > i$.
\end{enumerate}

Let the number of pairwise intersections of balls in $\Ucal = \{ B(p_j, 2R_j) \}_{j = 1}^N$ be denoted by $T$. We have the following estimate about $T$,
\begin{align*}
 \vol_{\G}(M) & > \sum_{j = 1}^N \vol_{\G} ( B(p_j, R_j) ) \\
 & = \alpha^{-1} \cdot \sum_{j=1}^N \alpha \vol_{\G} (B(p_j, R_j)) \\
 & \geqslant \alpha^{-1} \cdot \sum_{j = 1}^N \vol_{\G} ( B(p_j, 5 R_j) ) \\
 & \geqslant \alpha^{-1} \cdot \sum_{j = 1}^N \sum_{i = 1}^{T_j} \vol_{\G} ( B(p_j, R_{j_i}) ) \\
 & > \alpha^{-1} \beta_n ( 5^{-k} R_0 )^n \cdot T, 
\end{align*}
so that
\begin{equation*}
 T \leqslant \frac{\vol_{\G}(M)}{\beta_n R_0^n} 5^{n + (n + 1) \sqrt{\log_5{\frac{\vol_{\G}(M)}{\beta_n R_0^n}}}} .
\end{equation*}
That is, 
\begin{equation*}
 T \leqslant C_n  \ts \frac{\vol_{\G}(M)}{\inj(M, \G)^n} \exp{\left( \ts C_n^{\prime} \sqrt{\log{\frac{\vol_{\G}(M)}{\inj (M, \G)^n}}} \ts \right)},
\end{equation*}
where $C_n$ and $C_n^{\prime}$ are two positive constants only depending on $n$, and can be explicitly given,
\begin{equation*}
 C_n = \frac{2^n}{\beta_n} 5^{2n + (n + 1) \sqrt{ \log_5{\frac{2^n}{\beta_n}} } }, \quad C_n^{\prime} = (n + 1) \sqrt{\log{5}} . 
\end{equation*}

\vskip 10pt

The number $T$ of pairwise intersections is equal to the number $t_1$ of $1$-simplices in the nerve $\Ncal(\Ucal)$. Note that $t_0 \leqslant 2t_1$, and $t_i \leqslant t_1$ for $i \geqslant 2$, hence Theorem \ref{polyhedron} and Corollary \ref{nerve_2} holds. Theorem \ref{theorem_main} is yielded by Theorem \ref{polyhedron}.   

\vskip 30pt

\end{document}